\documentclass[a4paper,11pt,reqno]{amsart}

\usepackage[T1]{fontenc}
\usepackage{amsmath,amssymb,amsthm,mathtools,hyperref,geometry,cleveref,mathrsfs,xcolor,microtype,tabularx,array,multirow,float,orcidlink}

\allowdisplaybreaks

\definecolor{DarkBlue}{rgb}{0.1,0.1,0.55}
\definecolor{DarkRed}{rgb}{0.55,0.1,0.1}

\hypersetup{colorlinks=true,linkcolor=DarkBlue,citecolor=DarkRed,urlcolor=DarkBlue,pdftitle={Gaussian-Weighted Curvature Gaps for Self-Shrinkers},pdfauthor={Fagui Li and Yuhang Zhao}}

\makeatletter
\renewcommand{\@defaultbiblabelstyle}[1]{[#1]}
\makeatother

\numberwithin{equation}{section}

\newtheorem{theorem}{Theorem}[section]
\newtheorem{proposition}[theorem]{Proposition}
\newtheorem{lemma}[theorem]{Lemma}

\newtheorem{conjecture}[theorem]{Conjecture}
\theoremstyle{definition}

\theoremstyle{remark}
\newtheorem{remark}[theorem]{Remark}

\newcommand{\R}{\mathbb{R}}
\newcommand{\Sn}{\mathbb{S}}

\newcommand{\diver}{\operatorname{div}}

\newcommand{\inner}[2]{\left\langle #1,#2\right\rangle}

\newcommand{\Ent}{\Lambda}

\title[Gaussian-Weighted Curvature Gaps for Self-Shrinkers]{Gaussian-Weighted Curvature Gaps for Self-Shrinkers}

\author[F. Li]{Fagui Li}
\address{Frontier Interdisciplinary Domain, Beijing Institute of Technology, Zhuhai, Guangdong 519088, P. R. China}
\email{lifagui@bitzh.edu.cn}

\author[Y. Zhao]{Yuhang Zhao${}^{*}$}
\address{School of Mathematics, Nanjing University, Nanjing 210093, P. R. China}
\email{yuhangzhao@smail.nju.edu.cn}

\subjclass[2020]{53E10, 53C42, 58J50}
\keywords{Self-shrinkers, mean curvature flow, entropy, Gaussian $L^2$ curvature gaps, second fundamental form, drift Laplacian}
\thanks{$^{*}$ The corresponding author.}
\thanks{F. Li is partially supported by NSFC (Nos. 12271040 and 12501061) and the Research Start-up Funding of Beijing Institute of Technology (No. 5640011253301).}
\begin{document}

\begin{abstract}
In this paper, we prove lower bounds for the Gaussian-weighted \(L^2\)-curvature integral of embedded self-shrinkers.
The proof combines normal coordinate functions with weighted Poincar\'e inequalities arising from first-eigenvalue estimates of Ding--Xin and Brendle--Tsiamis.  For closed self-shrinkers, the estimate gives an explicit lower bound in terms of entropy and, together with the entropy gap theorem of Colding--Ilmanen--Minicozzi--White, yields a strict curvature gap for nonspherical closed shrinkers.  In dimension two, we combine this estimate with the classification theorem and entropy gap theorem of Bernstein--Wang to obtain the corresponding gap statement for complete embedded self-shrinkers with polynomial volume growth.
\end{abstract}

\maketitle

\section{Introduction}

Self-shrinkers are basic self-similar solutions of the mean curvature flow and play a central role in the analysis of singularities.  We use the convention that an immersed hypersurface $M^n\subset\R^{n+1}$ is a self-shrinker if its mean curvature vector satisfies
\begin{equation}\label{eq:shrinker-vector}
 \mathbf H=-\frac{x^\perp}{2},
\end{equation}
where $x$ is the position vector and $x^\perp$ denotes the normal projection.  After a choice of unit normal, the scalar form of this equation may differ by a sign according to the convention for scalar mean curvature.  This sign will not affect the estimates below, since the argument uses only $|A|^2$, the Gaussian weight, and the drift Laplacian.

The foundational work of Huisken \cite{Huisken1990,Huisken1993} showed that self-shrinkers arise as tangent flows at type-I singularities.  Colding and Minicozzi \cite{ColdingMinicozzi2012Generic} introduced and systematically used the Gaussian area and entropy in their study of generic singularities of the mean curvature flow.  For a hypersurface $M^n\subset\R^{n+1}$, the Gaussian area is defined by
\begin{equation*}
 F_{x_0,t_0}(M)=(4\pi t_0)^{-\frac n2}
 \int_M e^{-\frac{|x-x_0|^2}{4t_0}}\,d\mu,
 \qquad x_0\in\R^{n+1},\quad t_0>0,
\end{equation*}
and the entropy
\begin{equation*}
 \Ent(M)=\sup_{x_0\in\R^{n+1},\,t_0>0}F_{x_0,t_0}(M).
\end{equation*}
Huisken's monotonicity formula implies that entropy is nonincreasing along the mean curvature flow.  If $M$ is a self-shrinker with polynomial volume growth, then the entropy is achieved at the Gaussian center and scale $(0,1)$; this is recalled precisely in Theorem~\ref{thm:CM-entropy} below.

Rigidity theorems for self-shrinkers have been studied from several different viewpoints.  One important direction concerns pointwise and integral pinching of the second fundamental form.  Le and Sesum \cite{LeSesum2011} proved a gap theorem for self-shrinking hypersurfaces.  Cao and Li \cite{CaoLi2013} established the sharp pointwise gap in arbitrary codimension.  Ding and Xin \cite{DingXin2014} derived Simons-type identities for self-shrinkers and proved several pointwise and integral rigidity results.  Ding \cite{Ding2018} later strengthened an integral rigidity theorem for the second fundamental form.  These results are closely related to the classical Simons identity and to curvature pinching for minimal submanifolds.

A second direction concerns entropy and low-complexity singularities.  Stone \cite{Stone1994} computed the entropy of the generalized cylinders $\Sn^k(\sqrt{2k})\times\R^{n-k}$.  Colding, Ilmanen, Minicozzi, and White \cite{ColdingIlmanenMinicozziWhite2013} proved that the round sphere minimizes entropy among closed self-shrinkers and that there is a positive entropy gap to the next closed self-shrinker.  In dimension two, Bernstein and Wang \cite{BernsteinWang2017} proved a classification and gap theorem for complete embedded self-shrinkers of small entropy, using in particular Brendle's \cite{Brendle2016} uniqueness theorem for genus-zero self-shrinkers.  Zhu \cite{Zhu2020} later proved the entropy lower bound for closed hypersurfaces in all dimensions and extended the entropy-stability classification to a singular setting.

Our argument is also motivated by the normal-coordinate-function method used by Ge and Li \cite{GeLi2022} for embedded hypersurfaces in the unit sphere.  In that setting, normal coordinate functions, an eigenvalue lower bound, and a divergence argument lead to a lower bound for the total squared norm of the second fundamental form.  Here we use the same guiding principle in the Gaussian geometry of self-shrinkers.  The spherical volume comparison is replaced by a Gaussian first-moment estimate, and the Choi--Wang eigenvalue estimate is replaced by the Brendle--Tsiamis drift-Laplacian estimate.

The purpose of this paper is to prove a weighted curvature estimate connecting entropy with the Gaussian $L^2$-norm of the second fundamental form.  The main observation, which is the Gaussian analogue of the normal-function argument in Ge--Li \cite{GeLi2022}, is as follows.  Let $a\in\R^{n+1}$ be a constant vector and set
\[
 \psi_a(x)=\inner{\nu(x)}{a}, \qquad x\in M^n,
\]
where $\nu$ is a chosen unit normal.  Then
\begin{equation}\label{eq:intro-normal-gradient}
 \nabla\psi_a=-A(a^T),
\end{equation}
where $a^T$ is the tangential projection of $a$ and $A$ is the shape operator with respect to $\nu$.  Summing \eqref{eq:intro-normal-gradient} over an ambient orthonormal basis gives exactly $|A|^2$.  Thus a weighted Poincar\'e inequality applied to the functions $\psi_a$ gives a lower bound for $\int_M |A|^2 
 e^{-\frac{|x|^2}{4}}
 \,d\mu$, once the weighted means of the functions $\psi_a$ are controlled.  We prove this control directly from the divergence theorem and the Gaussian first-moment estimate.

We now state the main closed case theorem.  Throughout, \(\Ent(\Sn^k)\) denotes the entropy of the self-shrinking round sphere \(\Sn^k(\sqrt{2k})\).

\begin{theorem}\label{thm:closed-main}
Let $M^n\subset\R^{n+1}$ be a connected, smooth, closed, embedded self-shrinker.  Then
\begin{equation}\label{eq:closed-main-entropy}
 \int_M |A|^2 
 e^{-\frac{|x|^2}{4}}
 \,d\mu
 \geq
 4^{\frac n2-1}\pi^{\frac n2}
 \frac{\Ent(M)^2-1}{\Ent(M)}.
\end{equation}
Consequently,
\begin{equation}\label{eq:closed-main-sphere}
 \int_M |A|^2 
 e^{-\frac{|x|^2}{4}}
 \,d\mu
 \geq
 4^{\frac n2-1}\pi^{\frac n2}
 \frac{\Ent(\Sn^n)^2-1}{\Ent(\Sn^n)}.
\end{equation}
Moreover, if $M$ is not the round self-shrinking sphere $\Sn^n(\sqrt{2n})$, then there exists a constant $\delta_n>0$, depending only on $n$, such that
\begin{equation}\label{eq:closed-main-gap}
 \int_M |A|^2 e^{-\frac{|x|^2}{4}}
 \,d\mu
 \geq
 4^{\frac n2-1}\pi^{\frac n2}
 \frac{(\Ent(\Sn^n)+\delta_n)^2-1}{\Ent(\Sn^n)+\delta_n}.
\end{equation}
\end{theorem}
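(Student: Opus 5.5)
The plan is to apply a weighted Poincar\'e inequality to the normal coordinate functions $\psi_a=\inner{\nu}{a}$ and sum over an orthonormal basis. Write $d\mu_w=e^{-\frac{|x|^2}{4}}\,d\mu$ and $V=\int_M d\mu_w$. By Theorem~\ref{thm:CM-entropy} (the entropy is attained at $(0,1)$), $V=(4\pi)^{\frac n2}\Ent(M)$.

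\textbf{Step 1 (Poincar\'e inequality).} The drift Laplacian $\mathcal L=\Delta-\tfrac12\inner{x}{\nabla\cdot}$ is self-adjoint with respect to $d\mu_w$. For a closed embedded shrinker, the Brendle--Tsiamis estimate (building on Ding--Xin) gives the first nonzero eigenvalue bound $\lambda_1(\mathcal L)\ge \tfrac14$. This bound is the only place where embeddedness enters the eigenvalue estimate. I take it as the input weighted Poincar\'e inequality
\[
\int_M|\nabla f|^2\,d\mu_w\ \ge\ \tfrac14\int_M\Bigl(f-\tfrac1V\textstyle\int_M f\,d\mu_w\Bigr)^2d\mu_w .
\]

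\textbf{Step 2 (summing over a basis).} Apply this with $f=\psi_{e_i}$ for an orthonormal basis $e_1,\dots,e_{n+1}$ of $\R^{n+1}$. By \eqref{eq:intro-normal-gradient}, $\sum_i|\nabla\psi_{e_i}|^2=\sum_i|A(e_i^T)|^2=|A|^2$, and $\sum_i\psi_{e_i}^2=|\nu|^2=1$. Summing gives
\[
\int_M|A|^2\,d\mu_w\ \ge\ \tfrac14\Bigl(V-\tfrac1V\Bigl|\int_M\nu\,d\mu_w\Bigr|^2\Bigr).
\]

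\textbf{Step 3 (controlling the mean normal).} This is the step needing the real idea. Since $M$ is closed and embedded, it bounds a domain $\Omega$; take $\nu$ to be the outward normal. For any unit vector $e$, the divergence theorem applied to the field $e\,e^{-|x|^2/4}$ gives
\[
\int_M\inner{\nu}{e}\,d\mu_w=\int_\Omega \inner{\nabla e^{-\frac{|x|^2}{4}}}{e}\,dx=-\tfrac12\int_\Omega\inner{x}{e}e^{-\frac{|x|^2}{4}}dx .
\]
The modulus of the right-hand side is at most $\tfrac12\int_{\{\inner{x}{e}>0\}}\inner{x}{e}e^{-\frac{|x|^2}{4}}dx$, the Gaussian first moment of a half-space. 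Computing this in coordinates, it equals $\tfrac12(4\pi)^{\frac n2}\int_0^\infty te^{-t^2/4}dt=(4\pi)^{\frac n2}$. Choosing $e$ in the direction of $\int_M\nu\,d\mu_w$ yields $\bigl|\int_M\nu\,d\mu_w\bigr|\le(4\pi)^{\frac n2}$.

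\textbf{Step 4 (conclusion).} Inserting this bound into Step~2 gives
\[
\int_M|A|^2\,d\mu_w\ \ge\ \tfrac14(4\pi)^{\frac n2}\Bigl(\Ent(M)-\tfrac1{\Ent(M)}\Bigr)=4^{\frac n2-1}\pi^{\frac n2}\frac{\Ent(M)^2-1}{\Ent(M)},
\]
which is \eqref{eq:closed-main-entropy}.

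The function $t\mapsto (t^2-1)/t=t-1/t$ is increasing on $t>0$. By Colding--Ilmanen--Minicozzi--White, $\Ent(M)\ge\Ent(\Sn^n)$, and this gives \eqref{eq:closed-main-sphere}. Their entropy gap theorem provides $\delta_n>0$ with $\Ent(M)\ge\Ent(\Sn^n)+\delta_n$ whenever $M$ is a nonspherical closed shrinker. Monotonicity of $t-1/t$ then gives \eqref{eq:closed-main-gap}.

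I expect the main obstacle to be the precise form of the eigenvalue input. One has to check that Brendle--Tsiamis really gives $\lambda_1\ge\tfrac14$ for this normalization of $\mathcal L$, since the constant $4^{\frac n2-1}$ forces exactly $\tfrac14$. One also has to check that the orientation and sign conventions in Step~3 are harmless; only absolute values are used there.
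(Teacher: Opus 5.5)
Your proposal is correct and follows essentially the same route as the paper. The common steps are the normal coordinate functions $\psi_a$, the Ding--Xin/Brendle--Tsiamis Poincar\'e inequality with constant $\tfrac14$, the identities $\sum_\alpha\psi_{a_\alpha}^2=1$ and $\sum_\alpha|\nabla\psi_{a_\alpha}|^2=|A|^2$, a divergence-theorem bound $|\int_M\nu\rho\,d\mu|\le 2^n\pi^{n/2}=(4\pi)^{n/2}$, and finally the Colding--Minicozzi entropy identity together with the CIMW gap and the monotonicity of $t-1/t$.

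The differences are only cosmetic, and they give the identical constant. You sum the mean-subtracted inequality over an arbitrary basis instead of adapting the basis to $\int_M\nu\rho\,d\mu$ (Lemma~\ref{lem:basis}). In Step~3 you bound $|\int_\Omega\inner{x}{e}\rho\,dx|$ by the half-space moment, using cancellation on the enclosed domain, whereas the paper picks whichever component $\Omega_\pm$ has the smaller $\int|\inner{x}{a}|\rho\,dx$.
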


The same argument also applies to complete embedded self-shrinkers with polynomial volume growth.  In dimension two, the entropy gap theorem of Bernstein and Wang \cite{BernsteinWang2017} gives the following form.

\begin{theorem}\label{thm:two-dimensional-gap}
Let $\Sigma^2\subset\R^3$ be a connected, complete,  embedded self-shrinker with polynomial volume growth.  Then
\begin{equation}\label{eq:surface-main}
 \int_\Sigma |A|^2 e^{-\frac{|x|^2}{4}}\,d\mu
 \geq
 \pi\,\frac{\Ent(\Sigma)^2-1}{\Ent(\Sigma)}.
\end{equation}
If $\Sigma$ is not a plane, then
\begin{equation}\label{eq:surface-sphere-lower}
 \int_\Sigma |A|^2 e^{-\frac{|x|^2}{4}}\,d\mu
 \geq
 \pi\,\frac{\Ent(\Sn^2)^2-1}{\Ent(\Sn^2)}.
\end{equation}
Furthermore, there exists a constant $\delta_0>0$ such that if $\Sigma$ is not a plane, not the round sphere $\Sn^2(2)$, and not the round cylinder $\Sn^1(\sqrt2)\times\R$, then
\begin{equation}\label{eq:surface-cylinder-gap}
 \int_\Sigma |A|^2 e^{-\frac{|x|^2}{4}}\,d\mu
 \geq
 \pi\,\frac{(\Ent(\Sn^1)+\delta_0)^2-1}{\Ent(\Sn^1)+\delta_0}.
\end{equation}
\end{theorem}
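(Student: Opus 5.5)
The plan is to prove the inequality \eqref{eq:surface-main} by the normal-coordinate-function argument behind Theorem~\ref{thm:closed-main}, adapted to the noncompact setting. The gap statements \eqref{eq:surface-sphere-lower} and \eqref{eq:surface-cylinder-gap} should then follow from monotonicity of $f(t)=t-1/t=(t^2-1)/t$ on $[1,\infty)$ together with the Bernstein--Wang classification and entropy gap.

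\emph{Step 1: the general inequality.} Write $\rho=e^{-|x|^2/4}$, fix an orthonormal basis $e_1,e_2,e_3$ of $\R^3$, and set $\psi_i=\inner{\nu}{e_i}$. By \eqref{eq:intro-normal-gradient}, $\sum_i|\nabla\psi_i|^2=|A|^2$, and $\sum_i\psi_i^2=1$. Let $V=\int_\Sigma\rho\,d\mu$, which equals $4\pi\,\Ent(\Sigma)$ by Theorem~\ref{thm:CM-entropy}, and let $m=\int_\Sigma\nu\rho\,d\mu$.

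Apply the weighted Poincar\'e inequality for the drift Laplacian $\Delta-\frac12\inner{x}{\nabla\cdot}$ (Ding--Xin / Brendle--Tsiamis) to each $\psi_i$, with Poincar\'e constant $c$, and sum over $i$. This gives
\begin{equation*}
\int_\Sigma|A|^2\rho\,d\mu\ \ge\ c\Big(V-\frac{|m|^2}{V}\Big).
\end{equation*}
Polynomial volume growth ensures $\psi_i\in W^{1,2}(\rho\,d\mu)$, since $|\psi_i|\le1$. If $|A|^2\rho$ is not integrable, there is nothing to prove.

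To bound $m$, use that a complete, properly embedded surface separates $\R^3$; properness follows from polynomial volume growth. Let $\Omega$ be the region with outward normal $\nu$. For any unit vector $a$, the divergence theorem with cutoffs $\phi_R$ gives
\begin{equation*}
\int_\Sigma\inner{\nu}{a}\rho\,d\mu=\int_\Omega\operatorname{div}(a\rho)\,dx=-\frac12\int_\Omega\inner{x}{a}\rho\,dx .
\end{equation*}
The cutoff error tends to zero by Gaussian decay and polynomial growth. The right-hand side is controlled by the Gaussian first moment of a half-space, which bounds $|m|$ by an explicit multiple of $4\pi$. Combining the constants as in the closed case should give exactly \eqref{eq:surface-main}.

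\emph{Step 2: the gap statements.} Recall $\Ent(\Sigma)\ge1$, with equality only for a plane, and $1<\Ent(\Sn^2)<\Ent(\Sn^1)$ (Stone). Bernstein--Wang show that if $\Sigma$ is not a plane, then $\Ent(\Sigma)\ge\Ent(\Sn^2)$. Since $f$ is increasing, \eqref{eq:surface-sphere-lower} follows from \eqref{eq:surface-main}.

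Bernstein--Wang also give $\delta_0>0$ such that $\Ent(\Sigma)<\Ent(\Sn^1)+\delta_0$ forces $\Sigma$ to be a plane, $\Sn^2(2)$, or $\Sn^1(\sqrt2)\times\R$. Hence every other $\Sigma$ has $\Ent(\Sigma)\ge\Ent(\Sn^1)+\delta_0$, and monotonicity of $f$ gives \eqref{eq:surface-cylinder-gap}.

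\emph{Main obstacle.} The delicate point is Step 1 in the noncompact case. It requires justifying the separation of $\R^3$ by $\Sigma$, the cutoff argument in the divergence identity, and the validity of the Brendle--Tsiamis Poincar\'e inequality on complete shrinkers with polynomial growth. The latter is also where the sharp constant $c$ must match the one used in the closed case. I would first try to reduce to the closed-case computation by showing that every boundary term is $O(R^k e^{-R^2/4})$.
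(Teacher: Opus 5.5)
Your proposal is correct and follows essentially the same route as the paper. You apply the weighted Poincar\'e inequality with constant $1/4$ to the normal coordinate functions $\psi_i$. You bound $|\int_\Sigma \nu\rho\,d\mu|\le 4\pi$ by the divergence theorem, using separation, and you use $\int_\Sigma\rho\,d\mu=4\pi\Ent(\Sigma)$. The gap statements then come from Bernstein--Wang, Stone's ordering, and monotonicity of $t-1/t$.

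The differences are cosmetic, and both versions give the same constant $2^n\pi^{n/2}=4\pi$:
\begin{itemize}
\item You subtract the weighted mean from all three $\psi_i$, whereas the paper picks a basis in which $n$ of the $\psi_{a_i}$ have mean zero.
\item You bound the signed quantity $\frac12\bigl|\int_\Omega\langle x,a\rangle\rho\,dx\bigr|$ by half the half-space first moment, namely $\frac12\cdot 8\pi$. The paper instead chooses the component with the smaller absolute first moment.
\end{itemize}
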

Colding, Ilmanen, Minicozzi, and White \cite{ColdingIlmanenMinicozziWhite2013} also formulated the following conjectural extension of their entropy gap theorem.

\begin{conjecture}[Colding--Ilmanen--Minicozzi--White]\label{conj:CIMW-nonflat}
This is Conjecture~0.10 in \cite{ColdingIlmanenMinicozziWhite2013}.
Let \(2\leq n\leq 6\).  There exists a constant
\(\epsilon_n>0\), depending only on \(n\), with the following property.
If \(M^n\subset \mathbb R^{n+1}\) is a smooth complete embedded self-shrinker
which is neither a hyperplane
nor the round sphere
\(\mathbb S^n(\sqrt{2n})\), then
\[
   \Ent(M)\geq \Ent(\mathbb S^n)+\epsilon_n .
\]
In particular, every non-flat such self-shrinker \(M^n\subset \mathbb R^{n+1}\) satisfies
\[
   \Ent(M)\geq \Ent(\mathbb S^n),
\]
and equality can occur only when $M$ is the round sphere
\(\mathbb S^n(\sqrt{2n})\).
\end{conjecture}


\begin{remark}[Conditional higher-dimensional extension]\label{rem:conditional-higher-dimensional}
Theorem~\ref{thm:two-dimensional-gap} is stated in dimension two because the required entropy lower bound for noncompact self-shrinkers is presently available in this form from Bernstein--Wang's theorem.  If Conjecture~\ref{conj:CIMW-nonflat} holds, then the non-flat part of Theorem~\ref{thm:two-dimensional-gap} extends to higher dimensions $3 \leq n\leq6$. 
For further related problems concerning entropy, spectral counting functions on shrinkers, codimension bounds, and low-entropy classifications in arbitrary codimension, see Colding--Minicozzi \cite[Section~0.4.1]{ColdingMinicozzi2019Complexity}.
\end{remark}

\begin{remark}\label{rem:sphere}
The estimates above are not sharp on the model shrinkers.  For instance, on the round self-shrinking sphere $\Sn^n(\sqrt{2n})$, one has $|A|^2=1/2$, and hence
\[
 \int_{\Sn^n(\sqrt{2n})}|A|^2\rho\,d\mu
 =
 \frac12(4\pi)^{\frac n2}\Ent(\Sn^n).
\]
The lower bound in Theorem~\ref{thm:closed-main} is smaller.  The point of the theorem is not sharpness of the constant, but rather that an entropy gap yields an explicit Gaussian $L^2$ curvature gap.
\end{remark}

Motivated by Theorems~\ref{thm:closed-main} and~\ref{thm:two-dimensional-gap}, and by Remarks  \ref{rem:conditional-higher-dimensional} and \ref{rem:sphere}, we propose the following sharp conjectures.

\begin{conjecture}\label{conj:closed-hypersurface}
Let $M^n\subset\R^{n+1}$ be a connected, smooth, closed, embedded self-shrinker.  Then
\[
 \int_M |A|^2 e^{-\frac{|x|^2}{4}}\,d\mu
 \geq
 \frac12(4\pi)^{\frac n2}\Ent(M).
\]
Equality holds if and only if $M$ is the round self-shrinking sphere
$\Sn^n(\sqrt{2n})$.
\end{conjecture}

\begin{conjecture}\label{conj:complete-hypersurface}
Let $M^n\subset\R^{n+1}$ be a connected, complete,  embedded self-shrinker with polynomial volume growth.  If $M$ is not a hyperplane, then
\[
 \int_M |A|^2 e^{-\frac{|x|^2}{4}}\,d\mu
 \geq
 \frac12(4\pi)^{\frac n2}\Ent(M).
\]
Equality holds if and only if $M$ is a generalized round cylinder of the form
$
 \Sn^k(\sqrt{2k})\times\R^{n-k}
$
for some $1\leq k\leq n$.
\end{conjecture}

The paper is organized as follows.  In Section~\ref{sec:weighted}, we prove the weighted normal-coordinate estimates and record the external results used later.  In Section~\ref{sec:proofs}, we prove Theorems~\ref{thm:closed-main} and~\ref{thm:two-dimensional-gap}.

\section{Preliminaries and weighted normal-coordinate estimates}\label{sec:weighted}

Let $M^n\subset\R^{n+1}$ be a connected, properly embedded, two-sided hypersurface.  We assume that $M$ separates $\R^{n+1}$ into two open components, denoted by $\Omega_+$ and $\Omega_-$.  This holds for the closed embedded hypersurfaces in Theorem~\ref{thm:closed-main}; it also holds for the  complete embedded hypersurfaces considered in Theorem~\ref{thm:two-dimensional-gap} from the work  of Cheng and Zhou \cite[Theorem~1.3]{ChengXuZhouDetang}.  We choose a global unit normal $\nu$ and put
\[
 \rho=e^{-\frac{|x|^2}{4}}.
\]
We use $d\mu$ for the induced $n$-dimensional measure on $M$ and on boundary components identified with $M$ and $dx$ for Lebesgue measure on $\R^{n+1}$. Let $B_R(0)=\{x\in\mathbb{R}^{n+1}: |x|\leq R\}$ for any  $R>0$.  We denote by $\bar\nabla$ the Euclidean connection.
For a constant vector $a\in\R^{n+1}$ define
\begin{equation*}
 \varphi_a(x)=\inner{x}{a},
 \qquad
 \psi_a(x)=\inner{\nu(x)}{a}.
\end{equation*}

\begin{lemma}\label{lem:grad-identities}
For every constant vector $a\in\R^{n+1}$,
\begin{equation*}
 \nabla\varphi_a=a^T,
 \qquad
 \nabla\psi_a=-A(a^T),
\end{equation*}
where $a^T$ denotes the tangential projection of $a$.
\end{lemma}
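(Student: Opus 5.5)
The plan is to compute both gradients directly from the Euclidean connection $\bar\nabla$ and the definition of the shape operator.

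For $\varphi_a$, take any tangent vector field $X$ on $M$. Since $a$ is constant and $\bar\nabla_X x=X$, we get
\[
X\varphi_a=\inner{\bar\nabla_X x}{a}=\inner{X}{a}=\inner{X}{a^T}.
\]
The last equality holds because $X$ is tangent, so it is orthogonal to the normal part of $a$. Since $a^T$ is tangent to $M$, the gradient characterization $X\varphi_a=\inner{\nabla\varphi_a}{X}$ for all tangent $X$ gives $\nabla\varphi_a=a^T$.

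For $\psi_a$, again using that $a$ is constant,
\[
X\psi_a=\inner{\bar\nabla_X\nu}{a}.
\]
Differentiating $\inner{\nu}{\nu}=1$ shows that $\bar\nabla_X\nu$ is tangent to $M$. With the convention that the shape operator with respect to $\nu$ is $A(X)=\bar\nabla_X\nu$, this gives
\[
X\psi_a=\inner{A(X)}{a}=\inner{A(X)}{a^T}.
\]
Since $A$ is self-adjoint, this equals $\inner{X}{A(a^T)}$ up to the sign fixed by the convention. The paper's sign, $\nabla\psi_a=-A(a^T)$, corresponds to the convention $A(X)=-\bar\nabla_X\nu$, equivalently $\inner{A(X)}{Y}=\inner{\bar\nabla_XY}{\nu}$, since $\inner{\bar\nabla_X\nu}{Y}=-\inner{\nu}{\bar\nabla_XY}$. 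I would state this convention explicitly, so that
\[
X\psi_a=-\inner{A(X)}{a^T}=-\inner{X}{A(a^T)},
\]
and hence $\nabla\psi_a=-A(a^T)$.

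The only delicate point is this sign convention. As the paper remarks, it is harmless downstream, because only $|\nabla\psi_a|^2$ is used: summing over an orthonormal basis $\{e_i\}$ of $\R^{n+1}$, one has $\sum_i|A(e_i^T)|^2=|A|^2$.
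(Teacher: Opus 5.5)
Your proposal is correct and is essentially the paper's proof: differentiate $\varphi_a$ and $\psi_a$ along a tangent field $X$, use $\bar\nabla_X\nu=-A(X)$ (the Weingarten equation in the paper's convention), and use the self-adjointness of $A$ to identify the gradients. The intermediate line written with the opposite convention $A(X)=\bar\nabla_X\nu$ is unnecessary, and you should drop it so the argument is stated in the paper's sign convention only.
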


\begin{proof}
Let $X$ be a tangent vector field on $M$.  Since $a$ is constant in the ambient Euclidean space,
\[
 X(\varphi_a)=X\inner{x}{a}=\inner{X}{a}=\inner{X}{a^T},
\]
which gives $\nabla\varphi_a=a^T$.  For $\psi_a$, using the Weingarten equation $\bar\nabla_X\nu=-A(X)$, we obtain
\[
 X(\psi_a)=X\inner{\nu}{a}=\inner{\bar\nabla_X\nu}{a}
 =-\inner{A(X)}{a^T}=-\inner{X}{A(a^T)},
\]
because $A$ is self-adjoint.  Hence $\nabla\psi_a=-A(a^T)$.
\end{proof}

The following estimate is the only place where the separation property is used.

\begin{lemma}\label{lem:weighted-mean-bound}
Assume that $M$ has polynomial volume growth.  Then for every vector $a\in\R^{n+1}$ with $|a|=1$,
\begin{equation}\label{eq:weighted-mean-bound}
 \left|\int_M \psi_a\rho\,d\mu\right|
 \leq 2^n\pi^{\frac n2}.
\end{equation}
\end{lemma}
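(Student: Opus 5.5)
Apply the divergence theorem in $\R^{n+1}$ to the vector field $V=\rho\,a$ on the region $\Omega_+$; this is where the separation hypothesis enters. Orient $\nu$ so that it is the outward unit normal of $\Omega_+$ along $M$. Since $a$ is constant,
\[
 \diver V=\inner{\bar\nabla\rho}{a}=-\frac12\inner{x}{a}\rho .
\]
Formally the divergence theorem then gives
\[
 \int_M \psi_a\rho\,d\mu=\int_{\partial\Omega_+}\inner{V}{\nu}\,d\mu=\int_{\Omega_+}\diver V\,dx=-\frac12\int_{\Omega_+}\inner{x}{a}e^{-\frac{|x|^2}{4}}\,dx .
\]

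The right-hand side is then bounded by the full Gaussian first moment:
\[
 \left|\int_M\psi_a\rho\,d\mu\right|\le \frac12\int_{\R^{n+1}}|\inner{x}{a}|e^{-\frac{|x|^2}{4}}\,dx .
\]
Rotate so that $a=e_{n+1}$. The integral factors into $n$ one-dimensional Gaussians $\int_\R e^{-t^2/4}\,dt=2\sqrt\pi$, which give $(4\pi)^{n/2}$, and one first absolute moment $\int_\R|t|e^{-t^2/4}\,dt=4$. Hence the bound is
\[
 \frac12\cdot 4\cdot(4\pi)^{\frac n2}=2^{n+1}\pi^{\frac n2}.
\]
This is a factor $2$ larger than claimed. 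I would recover the factor by also applying the divergence theorem on $\Omega_-$, where $-\nu$ is the outward normal, which gives
\[
 \int_M\psi_a\rho\,d\mu=\frac12\int_{\Omega_-}\inner{x}{a}\rho\,dx .
\]
Averaging the two identities yields
\[
 \int_M\psi_a\rho\,d\mu=-\frac14\left(\int_{\Omega_+}-\int_{\Omega_-}\right)\inner{x}{a}\rho\,dx ,
\]
whose absolute value is at most $\frac14\int_{\R^{n+1}}|\inner{x}{a}|\rho\,dx=2^n\pi^{n/2}$. This is exactly \eqref{eq:weighted-mean-bound}. The two identities are consistent, since $\int_{\R^{n+1}}\inner{x}{a}\rho\,dx=0$ by oddness.

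The main obstacle is justifying the divergence theorem when $M$ is noncompact. When $M$ is closed, $\Omega_+$ can be taken to be the bounded component and there is nothing to check. In general I would truncate to $\Omega_\pm\cap B_R(0)$. The extra boundary term lies on $\partial B_R\cap\Omega_\pm$ and has absolute value at most $|\partial B_R|\,e^{-R^2/4}\to0$. The boundary part lying on $M$ is $M\cap B_R$, and its contribution converges to $\int_M\psi_a\rho\,d\mu$ because $|\psi_a|\le1$ and polynomial volume growth makes $\rho$ integrable on $M$. For almost every $R$ the truncated domain is Lipschitz, since $M$ meets the sphere $\partial B_R$ transversally for a.e. $R$ by Sard's theorem; this is enough to apply the divergence theorem to the smooth field $V$, after which one lets $R\to\infty$.
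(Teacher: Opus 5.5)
Your proposal is correct and follows essentially the paper's argument: the divergence theorem applied to $\rho a$ on both components $\Omega_\pm$, followed by the Gaussian first-moment computation $\frac14\int_{\R^{n+1}}|\inner{x}{a}|\rho\,dx=2^n\pi^{n/2}$. The differences are cosmetic. You average the two exact identities, whereas the paper keeps absolute values and selects the component with the smaller weighted first moment; both produce the factor $\frac14$. You also truncate by balls $B_R$ for a.e.\ $R$, using Sard to get transversality, whereas the paper uses a smooth cut-off $\phi_R$, which avoids the corner and Lipschitz-domain issue altogether.
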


\begin{proof}
Let $\eta_\pm$ be the outward unit normal to $\Omega_\pm$ along $M$.  Since $\eta_\pm=\pm\nu$, it is enough to estimate
\[
 \left|\int_{\partial\Omega_\pm}\inner{a}{\eta_\pm}\rho\,d\mu\right|.
\]
Fix one component $\Omega=\Omega_\pm$ ($\eta=\eta_\pm$) and for any $R>0$, let $\phi_R$ be a smooth cut-off function with compact support in $B_{R+1}(0)$ such that 
$$  \phi_R= \begin{cases}
     1,&\textup{in} \ B_R(0) \\
 0,&\textup{in}\  \mathbb{R}^{n+1}\setminus B_{R+1}(0).
  \end{cases}$$
and $0\leq \phi_R \leq 1, |\bar\nabla \phi_R| \leq 2$.

The divergence theorem applied to $\Omega$ gives
\begin{align*}
\int_{M\cap B_{R+1}}\phi_R \inner{a}{\eta}\rho\,d\mu
=\int_{\Omega \cap B_{R+1}}\diver(\phi_R \rho a)\,dx.
\end{align*}
Since
\begin{align*}
 \diver(\phi_R\rho a)&=\phi_R \inner{\bar\nabla\rho}{a}+\rho \inner{\bar \nabla \phi_R}{a}\\
 &=-\frac12 \phi_R \inner{x}{a}\rho+\rho \inner{\bar \nabla \phi_R}{a},
\end{align*}
we have
\begin{equation*}
\left|\int_{M\cap B_{R+1}}\phi_R \inner{a}{\eta}\rho\,d\mu\right|
\leq \frac12\int_{\Omega\cap B_{R+1}}  |\inner{x}{a}|\rho\,dx
+2\int_{\Omega\cap \big(B_{R+1}\setminus B_R \big)}\rho\,dx,
\end{equation*}
where we use the definition  of $\phi_R$.

The polynomial volume growth of $M$ implies $\int_M\rho\,d\mu<\infty$.  Since $|\inner{a}{\eta}|\leq 1$, dominated convergence gives convergence of the boundary integrals over $M\cap B_R$ to the corresponding integral over $\partial\Omega$, with $\partial\Omega$ identified with $M$.  Letting $R\to\infty$ and using the monotone convergence theorem for the right-hand of the above inequality, we obtain
\begin{equation}\label{eq:component-estimate}
\left|\int_{\partial\Omega}\inner{a}{\eta}\rho\,d\mu\right|
\leq \frac12\int_{\Omega}|\inner{x}{a}|\rho\,dx.
\end{equation}
The two components $\Omega_+$ and $\Omega_-$ partition $\R^{n+1}$ up to the hypersurface $M$, which has zero $(n+1)$-dimensional measure.  Hence one component, say $\Omega_\sigma$, satisfies
\begin{equation*}
 \int_{\Omega_\sigma}|\inner{x}{a}|\rho\,dx
 \leq \frac12\int_{\R^{n+1}}|\inner{x}{a}|\rho\,dx.
\end{equation*}
For this chosen component, the outward unit normal along $M$ is either $\nu$ or $-\nu$; therefore the absolute value of the corresponding boundary integral is exactly $\left|\int_M\psi_a\rho\,d\mu\right|$.  Using this component in \eqref{eq:component-estimate} gives
\begin{equation}\label{eq:before-gaussian-moment}
\left|\int_M \psi_a\rho\,d\mu\right|
\leq \frac14\int_{\R^{n+1}}|\inner{x}{a}|e^{-|x|^2/4}\,dx.
\end{equation}
By rotational invariance, it suffices to take $a=e_1$, where $e_1=(1,0,\ldots,0)$ is the first standard basis vector of $\R^{n+1}$.  Then
\begin{equation}\label{eq:gaussian-first-moment}
\begin{aligned}
\int_{\R^{n+1}}|\inner{x}{a}|e^{-|x|^2/4}\,dx
&=\left(\int_{-\infty}^{\infty}|s|e^{-s^2/4}\,ds\right)
  \left(\int_{-\infty}^{\infty}e^{-t^2/4}\,dt\right)^n \\
&=4(2\sqrt\pi)^n=2^{n+2}\pi^{\frac n2}.
\end{aligned}
\end{equation}
Combining \eqref{eq:before-gaussian-moment} and \eqref{eq:gaussian-first-moment} yields \eqref{eq:weighted-mean-bound}.
\end{proof}
\begin{remark}
The constants depend on the normalization $\rho=e^{-|x|^2/4}$, which is the standard weight for the shrinker equation \eqref{eq:shrinker-vector}.  If one changes the Gaussian weight, the first-moment computation in \eqref{eq:gaussian-first-moment} and the final constants must be rescaled accordingly.
\end{remark}
\begin{lemma}\label{lem:basis}
Assume that $M$ has polynomial volume growth.  Then there exists an ambient orthonormal basis $\{a_1,\ldots,a_{n+1}\}$ of $\R^{n+1}$ such that
\begin{equation*}
 \int_M \psi_{a_i}\rho\,d\mu=0,
 \qquad 1\leq i\leq n.
\end{equation*}
\end{lemma}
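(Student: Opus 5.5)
The map $a\mapsto \int_M\psi_a\rho\,d\mu$ is linear, and we will use this to find the basis. First we check that it is well defined. Since $|\psi_a|\le |a|$ and polynomial volume growth gives $\int_M\rho\,d\mu<\infty$, each integral $\int_M\psi_a\rho\,d\mu$ converges absolutely. Because $\psi_a=\inner{\nu}{a}$ is linear in $a$, we may define the vector
\[
 V=\int_M \nu\,\rho\,d\mu\in\R^{n+1},
\]
where the integral is taken componentwise. Each component is finite by the same domination. With this definition,
\[
 \int_M\psi_a\rho\,d\mu=\inner{V}{a}\quad\text{for every } a\in\R^{n+1}.
\]
The lemma is then equivalent to finding an orthonormal basis whose first $n$ vectors are orthogonal to $V$.

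The construction splits into two cases. If $V\neq 0$, set $a_{n+1}=V/|V|$ and complete it to an orthonormal basis $\{a_1,\dots,a_n,a_{n+1}\}$ of $\R^{n+1}$, for instance by Gram--Schmidt. Then $a_i\perp V$ for $1\le i\le n$, so $\int_M\psi_{a_i}\rho\,d\mu=\inner{V}{a_i}=0$. If $V=0$, any orthonormal basis works, and in fact all $n+1$ integrals vanish. Lemma~\ref{lem:weighted-mean-bound} is not needed for this statement; it will control the one remaining mean $\inner{V}{a_{n+1}}$, which satisfies $|V|\le 2^n\pi^{n/2}$.

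The only step needing care is justifying that the weighted integral commutes with the linear dependence on $a$, that is, the identity $\int_M\psi_a\rho\,d\mu=\inner{V}{a}$. This follows from absolute integrability of each component $\inner{\nu}{e_j}\rho$, which polynomial volume growth guarantees, so there is no real obstacle.
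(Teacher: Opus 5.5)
Your proof is correct and matches the paper's argument essentially verbatim: the paper also defines $V=\int_M \nu\rho\,d\mu$ (well defined since $|\nu|=1$ and $\int_M\rho\,d\mu<\infty$), writes $\int_M\psi_a\rho\,d\mu=\inner{V}{a}$, and then takes $a_{n+1}=V/|V|$ with $a_1,\ldots,a_n$ an orthonormal basis of $V^\perp$, or any orthonormal basis when $V=0$.
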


\begin{proof}
The vector
\[
 V=\int_M \nu\rho\,d\mu\in\R^{n+1}
\]
is well defined because $|\nu|=1$ and $\int_M\rho\,d\mu<\infty$.  For every $a\in\R^{n+1}$,
\[
 \int_M\psi_a\rho\,d\mu=\inner{V}{a}.
\]
If $V\ne0$, choose $a_{n+1}=V/|V|$ and choose $a_1,\ldots,a_n$ as an orthonormal basis of $V^\perp$.  If $V=0$, every orthonormal basis has the desired property.
\end{proof}

We recall the external results that will be used in the proof below.  They are stated explicitly for the reader's convenience.

\begin{theorem}[Brendle--Tsiamis \cite{BrendleTsiamis2024}; Ding--Xin \cite{DingXin2013}]\label{thm:BT}
Let $M^n\subset\R^{n+1}$ be a complete embedded self-shrinker with polynomial volume growth.  Suppose that $f\in H^1_{\mathrm{loc}}(M)$ satisfies
\[
 \int_M e^{-\frac{|x|^2}{4}}(f^2+|\nabla f|^2)\,d\mu<\infty,
 \qquad
 \int_M f e^{-\frac{|x|^2}{4}}\,d\mu=0.
\]
Then
\begin{equation*}
 \int_M |\nabla f|^2 e^{-\frac{|x|^2}{4}}\,d\mu
 \geq \frac14\int_M f^2 e^{-\frac{|x|^2}{4}}\,d\mu.
\end{equation*}
\end{theorem}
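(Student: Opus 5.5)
The plan is to prove the inequality as a first-eigenvalue lower bound $\lambda_1(-\mathcal L)\geq \tfrac14$ for the drift Laplacian
\[
\mathcal L=\Delta-\tfrac12\inner{x}{\nabla\cdot}
\]
on $M$, acting on the weighted space $L^2(\rho\,d\mu)$. The approach is a Gaussian version of the Choi--Wang argument via a weighted Reilly formula on one of the components $\Omega_\pm$. The key structural facts are these:
\begin{itemize}
\item In $\R^{n+1}$ with weight $\rho=e^{-f}$, $f=|x|^2/4$, the Bakry--\'Emery tensor is $\mathrm{Ric}_f=\bar\nabla^2 f=\tfrac12\,\mathrm{id}$.
\item The shrinker equation \eqref{eq:shrinker-vector} says exactly that $M$ is $f$-minimal: its weighted mean curvature $H_f=H-\tfrac12\inner{x}{\nu}$ vanishes, up to the sign convention mentioned in the introduction.
\end{itemize}
Choi--Wang gives $\lambda_1\geq k/2$ for $f$-minimal hypersurfaces when $\mathrm{Ric}_f\geq k$. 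With $k=\tfrac12$ this yields exactly the constant $\tfrac14$.

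First I treat the closed case. By the spectral theorem for $-\mathcal L$ on $L^2(\rho\,d\mu)$, it suffices to consider a first nonconstant eigenfunction $u$ with $\mathcal L u=-\lambda u$ and $\int_M u\rho\,d\mu=0$. I then solve the ambient weighted Dirichlet problem $\bar\Delta_f v=0$ in $\Omega$ with $v=u$ on $M$. Applying the weighted Reilly formula to $v$ gives
\[
0=\int_\Omega\big(|\bar\nabla^2 v|^2+\mathrm{Ric}_f(\bar\nabla v,\bar\nabla v)\big)\rho\,dx
+\int_M\big(2 v_\eta\,\mathcal L u+H_f v_\eta^2+A(\nabla u,\nabla u)\big)\rho\,d\mu .
\]
I substitute $H_f=0$, $\mathrm{Ric}_f=\tfrac12$ and $\mathcal L u=-\lambda u$. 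The boundary term $\int_M A(\nabla u,\nabla u)\rho$ changes sign when $\Omega_+$ is replaced by $\Omega_-$, so I choose the component on which it is $\geq0$. Next, the weighted divergence theorem gives
\[
\int_\Omega|\bar\nabla v|^2\rho\,dx=\int_M u\,v_\eta\,\rho\,d\mu .
\]
Finally, Cauchy--Schwarz bounds $\int_M u\,v_\eta\,\rho$ in terms of $\big(\int_M v_\eta^2\rho\big)^{1/2}$ and $\big(\int_M u^2\rho\big)^{1/2}=\big(\lambda^{-1}\int_M|\nabla u|^2\rho\big)^{1/2}$. Combining these, as in Choi--Wang, yields $\tfrac12\leq 2\lambda$, that is $\lambda\geq\tfrac14$.

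For complete shrinkers with polynomial volume growth, I first note that the Gaussian weight makes the embedding $H^1(\rho)\hookrightarrow L^2(\rho)$ compact, as in Cheng--Zhou. Hence the spectrum of $-\mathcal L$ is discrete and the variational characterization $\lambda_1=\inf\int|\nabla f|^2\rho/\int f^2\rho$ over mean-zero $f$ holds; the mean-zero condition is what excludes constants. I then run the same Reilly argument with cutoffs $\phi_R$ as in Lemma~\ref{lem:weighted-mean-bound}. The separation into $\Omega_\pm$ is supplied by Cheng--Zhou, as noted at the start of this section. I take $v$ to be a weighted-harmonic extension on the unbounded component, with finite weighted Dirichlet energy.

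The main obstacle is this noncompact step. It requires existence and decay of the extension $v$ on an unbounded domain, and control of the cutoff error terms $\int|\bar\nabla\phi_R|\,|\bar\nabla v|\,|\bar\nabla^2 v|\rho$ so that they vanish as $R\to\infty$. I would first try to obtain these from weighted $L^2$ energy estimates and Gaussian integrability. If that fails, I would instead approximate $f$ by compactly supported functions and use an exhaustion argument in the spirit of Brendle--Tsiamis. Density of compactly supported functions in the weighted $H^1$ space, which follows from polynomial volume growth, then extends the inequality to all $f$ satisfying the hypotheses.
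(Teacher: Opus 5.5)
The paper does not prove this statement. It quotes it from Ding--Xin for closed shrinkers and from Brendle--Tsiamis for complete noncompact ones.

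\textbf{Closed case.} Your argument is the weighted Choi--Wang argument behind the Ding--Xin result, and it is correct in outline. The Cauchy--Schwarz step is unnecessary: inserting $\int_\Omega|\bar\nabla v|^2\rho\,dx=\int_M u\,v_\eta\,\rho\,d\mu$ into the Reilly identity gives $\tfrac12\int_\Omega|\bar\nabla v|^2\rho\le 2\lambda\int_\Omega|\bar\nabla v|^2\rho$ directly. That integral is positive because $u$ is nonconstant.

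What you pass over is that even for closed $M$ one of $\Omega_\pm$ is unbounded. The sign condition on $\int_M A(\nabla u,\nabla u)\rho$ may force you onto that side. You then have to solve the Dirichlet problem in $W^{1,2}(\Omega,\rho\,dx)$ and justify the Reilly formula at infinity. With compact boundary this is routine: the weighted Bochner identity $\tfrac12\bar\Delta_f|\bar\nabla v|^2=|\bar\nabla^2v|^2+\tfrac12|\bar\nabla v|^2$ plus a Caccioppoli argument bounds $\int|\bar\nabla^2 v|^2\rho$ away from $M$, so the cutoff errors vanish. But it has to be said.

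\textbf{Noncompact case: the genuine gap.} This case is exactly the content of the Brendle--Tsiamis theorem. It is also what the paper actually needs in the two-dimensional theorem (cylinders and other noncompact shrinkers). Your proposal names the obstacles and offers two fallbacks, but carries out neither. When $M$ is noncompact, both $\Omega_\pm$ have noncompact boundary, and the cut-off Reilly argument requires three things.
(i) A weighted-harmonic extension with finite weighted energy and $v_\eta\in L^2(M,\rho)$. This is already a trace/extension problem on a noncompact boundary whose geometry at infinity is not controlled.
(ii) A bound on $\int_\Omega|\bar\nabla^2 v|^2\rho$ \emph{up to} $M$. You need it to kill the error terms near $M\cap\partial B_R$ and to justify the boundary integration by parts $\int_M\inner{\nabla u}{\nabla v_\eta}\rho=-\int_M v_\eta\,\mathcal L u\,\rho$.
(iii) Absolute convergence of $\int_M A(\nabla u,\nabla u)\rho$. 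Without it, choosing the side on which this integral is nonnegative is meaningless.

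All three are boundary estimates whose constants depend on the geometry of $M$ at infinity (curvature, width of one-sided collars). Polynomial volume growth plus weighted energy bounds give no such control. The interior Caccioppoli argument from the closed case does not reach the boundary.

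The fallback does not close the gap either. Density of compactly supported functions is only the easy last step. The Reilly argument uses the eigenvalue equation $\mathcal L u=-\lambda u$ in the boundary term, so it cannot be applied to arbitrary test functions. An exhaustion by $M\cap B_R$ adds the boundary piece $\Omega\cap\partial B_R$, with corners along $M\cap\partial B_R$, and you have not controlled its contributions.

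As it stands, your proposal establishes at most the compact case.
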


The noncompact  embedded case is due to Brendle and Tsiamis \cite[Theorem~1]{BrendleTsiamis2024}; the compact case was proved earlier by Ding and Xin \cite[Theorem~1.3]{DingXin2013}.  The estimate is the shrinker version  of the first eigenvalue estimate of Choi and Wang \cite{ChoiWang1983} for embedded minimal hypersurfaces in spheres, which was later used by Choi and Schoen \cite{ChoiSchoen1985} in their compactness theorem for minimal surfaces.

For comparison, we recall the first eigenvalue conjecture of Yau \cite{Yau1982}; see also Schoen--Yau \cite{SchoenYau1994}.  The conjecture asserts that if $M^m\subset \Sn^{m+1}(1)$ is a closed embedded minimal hypersurface in the unit sphere, then the first nonzero eigenvalue of the Laplace--Beltrami operator on $M$ satisfies $\lambda_1(M)=m$.  Although the conjecture remains a central problem in general, it has been verified in several important cases.  Tang and Yan \cite{TangYan2013} and Tang--Xie--Yan \cite{TangXieYan2014} proved the conjecture for isoparametric minimal hypersurfaces.  Quantitative improvements of the Choi--Wang lower bound have also been obtained recently: Duncan--Sire--Spruck \cite{DuncanSireSpruck2024} proved an explicit estimate of the form $\lambda_1(M)\ge m/2+\varepsilon$ with $\varepsilon>0$ depending on the dimension and an upper bound for $|A|$, and Jim\'enez--Chinchay--Zhou \cite{JimenezChinchayZhou2026} obtained a further lower bound in terms of the dimension and $\max_M |A|$.  Zhao \cite{Zhao2026FirstEigenvalue} proved that
\[
  \lambda_1(M)>\frac m2+G(m,\max_M|A|,\min_M|A| ),
\]
where $G$ is a positive function depending only on $m$, $\max_M|A|$, and $\min_M|A|$; in particular, when $|A|$ is constant, this gives a dimension-dependent gap above $m/2$.  Thus Theorem~\ref{thm:BT} may be viewed as the Gaussian self-shrinker counterpart of this circle of first-eigenvalue estimates.

\begin{theorem}[Colding--Minicozzi \cite{ColdingMinicozzi2012Generic}]\label{thm:CM-entropy}
Let $M^n\subset\R^{n+1}$ be a self-shrinker with polynomial volume growth.  Then the entropy is achieved at center $0$ and scale $1$; namely,
\begin{equation}\label{eq:CM-entropy}
 \Ent(M)=F_{0,1}(M)=(4\pi)^{-\frac n2}\int_M e^{-\frac{|x|^2}{4}}\,d\mu.
\end{equation}
\end{theorem}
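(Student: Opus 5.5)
The plan is to use Huisken's monotonicity formula along the self-similar flow generated by $M$, together with the scaling invariance of the Gaussian areas. Since $\Ent(M)\ge F_{0,1}(M)$ trivially, it suffices to show $F_{x_0,t_0}(M)\le F_{0,1}(M)$ for every $x_0\in\R^{n+1}$ and $t_0>0$.

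\textbf{Setting up the flow.} By \eqref{eq:shrinker-vector}, the family $M_t=\sqrt{-t}\,M$, $t<0$, is a mean curvature flow with $M_{-1}=M$. Let
\[
\Phi_{(y,T)}(x,t)=(4\pi(T-t))^{-\frac n2}e^{-\frac{|x-y|^2}{4(T-t)}},\qquad t<T,
\]
be the backward heat kernel. By definition of the Gaussian areas,
\[
\int_{M_t}\Phi_{(y,T)}(\cdot,t)\,d\mu=F_{y,T-t}(M_t).
\]
Fix $(x_0,t_0)$ and put $y=x_0$, $T=-1+t_0>-1$. Then $F_{x_0,t_0}(M)$ is exactly this quantity evaluated at $t=-1$.

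\textbf{Monotonicity and letting $t\to-\infty$.} Huisken's monotonicity formula says that $t\mapsto\int_{M_t}\Phi_{(y,T)}\,d\mu$ is nonincreasing on $(-\infty,\min(T,0))$. Since $-1<\min(T,0)$, every $t\le -1$ lies in this interval, so for all such $t$,
\[
F_{x_0,t_0}(M)\le F_{x_0,T-t}(M_t).
\]
The Gaussian areas satisfy the scaling invariance $F_{\lambda y,\lambda^2 s}(\lambda M)=F_{y,s}(M)$. Applying it with $\lambda=\sqrt{-t}$ gives
\[
F_{x_0,T-t}(M_t)=F_{x_0/\sqrt{-t},\,(T-t)/(-t)}(M).
\]
As $t\to-\infty$, the center $x_0/\sqrt{-t}\to0$ and the scale $(T-t)/(-t)\to1$. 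We then expect $F_{x_0,t_0}(M)\le F_{0,1}(M)$, provided $(y,s)\mapsto F_{y,s}(M)$ is continuous at $(0,1)$.

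\textbf{Main obstacle.} The delicate points both come from noncompactness.

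First, the continuity at $(0,1)$. For $|y|\le1$ and $s\in[\tfrac12,2]$, the integrands are bounded by $C e^{-|x|^2/16}$. By polynomial volume growth this bound is integrable on $M$, so continuity follows from dominated convergence.

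Second, the monotonicity formula must be justified on noncompact $M_t$. Its derivation is a divergence/integration-by-parts identity with Gaussian weights. The boundary terms are handled by a cutoff argument, in the same way as in the proof of Lemma~\ref{lem:weighted-mean-bound}. Polynomial volume growth of $M_t$, which holds uniformly on compact time intervals since $M_t$ is a dilate of $M$, makes the error terms vanish. Alternatively, one can differentiate $F_{x_0,t_0}$ directly in $(x_0,t_0)$ using the shrinker equation. I expect this justification of monotonicity in the noncompact setting to be the only technical step.
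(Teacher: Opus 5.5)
The paper gives no proof of this statement; it quotes it from Colding--Minicozzi. Your argument is correct and is the standard proof. Each step checks out:
\begin{itemize}
\item $M_t=\sqrt{-t}\,M$ is a mean curvature flow.
\item $(-\infty,-1]\subset(-\infty,\min(T,0))$, because $T=t_0-1>-1$.
\item The scaling identity $F_{\lambda y,\lambda^2 s}(\lambda M)=F_{y,s}(M)$ holds.
\item The bound $Ce^{-|x|^2/16}$ for $|y|\le 1$, $s\in[\tfrac12,2]$ gives continuity of $(y,s)\mapsto F_{y,s}(M)$ at $(0,1)$.
\end{itemize}

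Your closing alternative is the same argument in other coordinates. Put $s=(-t)^{-1/2}$ and $t_s=1+(t_0-1)s^2$. Your path of centers and scales becomes $s\mapsto(sx_0,t_s)$, running from $(0,1)$ at $s=0$ to $(x_0,t_0)$ at $s=1$. Transported to the fixed surface $M$, Huisken's identity reads
\[
\frac{d}{ds}F_{sx_0,t_s}(M)=-\frac{s}{2t_s^2}\int_M\bigl|x_0^\perp+(t_0-1)s\,x^\perp\bigr|^2(4\pi t_s)^{-\frac n2}e^{-\frac{|x-sx_0|^2}{4t_s}}\,d\mu\le 0 .
\]
This is essentially how Colding--Minicozzi present it. Working on $M$ removes the flow and the limit $t\to-\infty$. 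The integrations by parts are then on the single surface $M$, justified by the same cutoff and Gaussian-domination argument.

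The direct version also shows that $(0,1)$ is a strict maximum unless $M$ splits off a line. Vanishing of the square forces both $x_0^\perp\equiv 0$ and $(t_0-1)x^\perp\equiv 0$. As you have written it, your limit argument gives only the non-strict inequality, which is all the statement requires.

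If you keep the flow version, one detail needs attention. The cutoff in Lemma~\ref{lem:weighted-mean-bound} has only a gradient bound. So either also require $|\bar\nabla^2\phi_R|\le C$, or integrate by parts only once. The remaining term $\Phi\inner{\bar\nabla\phi_R}{\mathbf H}$ is harmless, because $|\mathbf H|\le |x|/(2(-t))$ on $M_t$.
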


This is part of the entropy theory developed by Colding and Minicozzi \cite{ColdingMinicozzi2012Generic}.  The polynomial volume growth assumption guarantees the finiteness of the Gaussian area in \eqref{eq:CM-entropy}.

\begin{lemma}[Stone \cite{Stone1994}]\label{lem:stone}
For the self-shrinking generalized cylinders
\[
 \Sn^k(\sqrt{2k})\times\R^{n-k}\subset\R^{n+1},
 \qquad 1\leq k\leq n,
\]
the entropy is independent of the Euclidean factor.  More precisely,
\[
 \Ent\bigl(\Sn^k(\sqrt{2k})\times\R^{n-k}\bigr)
 =
 \Ent\bigl(\Sn^k(\sqrt{2k})\bigr).
\]
Moreover,
\[
 2>\Ent\bigl(\Sn^1(\sqrt2)\bigr)>
 \Ent\bigl(\Sn^2(2)\bigr)>
 \cdots>
 \Ent\bigl(\Sn^n(\sqrt{2n})\bigr)>
 \cdots>1 .
\]
The lower endpoint is the entropy of the flat hyperplane, namely,
$
 \Ent(\R^n)=1.
$
\end{lemma}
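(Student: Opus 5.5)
The plan is to reduce everything to an explicit formula for the Gaussian area of the round shrinking spheres and then study that formula as a function of $k$. By Theorem~\ref{thm:CM-entropy}, every cylinder $\Sn^k(\sqrt{2k})\times\R^{n-k}$ has polynomial volume growth, so its entropy equals $F_{0,1}$. Writing $x=(y,z)$ with $y\in\Sn^k(\sqrt{2k})\subset\R^{k+1}$ and $z\in\R^{n-k}$, the weight factorizes as $e^{-|y|^2/4}e^{-|z|^2/4}$. Fubini then gives
\[
(4\pi)^{-\frac n2}\int e^{-\frac{|x|^2}{4}}\,d\mu=(4\pi)^{-\frac k2}\int_{\Sn^k(\sqrt{2k})}e^{-\frac{|y|^2}{4}}\,d\mu\cdot(4\pi)^{-\frac{n-k}{2}}\int_{\R^{n-k}}e^{-\frac{|z|^2}{4}}\,dz .
\]
The last factor equals $1$, which proves independence of the Euclidean factor. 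The same computation with $k=0$ (just the $\R^n$ factor) gives $\Ent(\R^n)=1$.

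Next I compute the sphere entropy. On $\Sn^k(\sqrt{2k})$ one has $|y|^2=2k$, and $|\Sn^k(r)|=2\pi^{\frac{k+1}2}r^k/\Gamma(\frac{k+1}2)$. Hence
\[
\Lambda_k:=\Ent(\Sn^k)=\frac{2\sqrt\pi}{\Gamma(\frac{k+1}{2})}\Bigl(\frac{k}{2e}\Bigr)^{\frac k2}.
\]
For $k=1$ this gives $\Lambda_1=\sqrt{2\pi/e}\approx1.52<2$, which is the upper endpoint.

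For the monotonicity I treat $k$ as a continuous variable $s\ge1$ and differentiate $\log\Lambda_s$:
\[
\frac{d}{ds}\log\Lambda_s=\tfrac12\log\tfrac s2-\tfrac12\,\psi\bigl(\tfrac{s+1}{2}\bigr),
\]
where $\psi=\Gamma'/\Gamma$ is the digamma function. This derivative is negative if $\psi(x)>\log(x-\tfrac12)$ for $x>\tfrac12$, applied with $x=\frac{s+1}2$. I expect this digamma inequality to be the only genuinely nontrivial step. I would prove it via the series $\psi'(x)=\sum_{j\ge0}(x+j)^{-2}$ compared with $\int_0^\infty (x-\tfrac12+t)^{-2}dt$, using convexity of $t\mapsto(x+t)^{-2}$ on unit intervals (a midpoint estimate). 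This gives $\psi'(x)<1/(x-\tfrac12)$. Then $\psi(x)-\log(x-\tfrac12)$ is decreasing, and it tends to $0$ as $x\to\infty$ by $\psi(x)=\log x+O(1/x)$, so it is positive. This yields the strict chain $\Lambda_1>\Lambda_2>\cdots$.

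Finally, for the lower bound $\Lambda_k>1$, I use Stirling's formula $\Gamma(\frac{k+1}2)\sim\sqrt{2\pi}\,(\frac{k+1}2)^{\frac k2}e^{-\frac{k+1}2}$. It gives
\[
\Lambda_k\to 2\sqrt\pi\,e^{\frac12}\lim_{k\to\infty}\Bigl(\tfrac{k}{k+1}\Bigr)^{\frac k2}\big/\sqrt{2\pi}=\sqrt2 .
\]
Since the sequence is strictly decreasing with limit $\sqrt2>1$, every $\Lambda_k$ exceeds $\sqrt2>1$. This completes the stated chain of inequalities.
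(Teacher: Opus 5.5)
Your proposal is correct. The paper gives no proof of this lemma; it is simply quoted from Stone. So there is no in-text argument to compare against, and what you have written is a self-contained verification replacing the citation.

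Each step checks out:
\begin{itemize}
\item The Fubini factorization removes the Euclidean factor, because $(4\pi)^{-\frac{n-k}{2}}\int_{\R^{n-k}}e^{-|z|^2/4}\,dz=1$.
\item The closed form $\Lambda_k=\frac{2\sqrt\pi}{\Gamma(\frac{k+1}{2})}\bigl(\frac{k}{2e}\bigr)^{k/2}$ is right, and so is the derivative $\frac12\log\frac s2-\frac12\psi(\frac{s+1}{2})$.
\item Your midpoint-convexity bound $\psi'(x)<(x-\frac12)^{-1}$ for $x>\frac12$, combined with $\psi(x)-\log(x-\frac12)\to0$, is a clean proof of the needed inequality $\psi(x)>\log(x-\frac12)$.
\item Stirling's formula does give $\Lambda_k\to\sqrt2$.
\end{itemize}
Compared with the bare citation, your route takes more space but yields the explicit formula and the sharper bound $\Lambda_k>\sqrt2$.

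Two small points. First, the identification of the entropy of $\Sn^k(\sqrt{2k})\times\R^{n-k}$, and of $\Sn^k(\sqrt{2k})\subset\R^{k+1}$, with $F_{0,1}$ is supplied by Theorem~\ref{thm:CM-entropy}. You invoke it correctly, but it is a genuine input: the Gaussian-integral computation by itself only evaluates $F_{0,1}$, not the supremum over all centers and scales.

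Second, the hyperplane case should be done directly, as your parenthetical says. Literally substituting $k=0$ into the closed formula gives $2$ (the density of a doubled plane), not $1$. That observation is actually useful. Your derivative argument is valid for all $s>0$, and $\Lambda_s\to2$ as $s\to0^+$, so the upper endpoint $2>\Lambda_1$ also follows from monotonicity. This avoids relying on the numerical value $\sqrt{2\pi/e}\approx1.52$, though checking $\pi<2e$ is equally fine.
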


\begin{theorem}[Colding--Ilmanen--Minicozzi--White \cite{ColdingIlmanenMinicozziWhite2013}]\label{thm:CIMW}
For each $n\geq1$, the round sphere $\Sn^n(\sqrt{2n})$ minimizes entropy among all smooth, closed, embedded self-shrinkers in $\R^{n+1}$.  Moreover, there exists $\delta_n>0$ such that if $M^n\subset\R^{n+1}$ is a smooth, closed, embedded self-shrinker which is not the round sphere $\Sn^n(\sqrt{2n})$, then
\begin{equation*}
 \Ent(M)\geq \Ent(\Sn^n)+\delta_n.
\end{equation*}
\end{theorem}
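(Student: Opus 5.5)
This is the entropy-minimization and gap theorem of Colding--Ilmanen--Minicozzi--White, used here as an external input. If one wanted to reprove it, I would follow their strategy: first show that $\Sn^n(\sqrt{2n})$ minimizes entropy among closed embedded shrinkers, then obtain the gap by a compactness-and-rigidity argument. Throughout I would use Theorem~\ref{thm:CM-entropy}: for shrinkers with polynomial volume growth the entropy equals $F_{0,1}$.

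\textbf{Step 1: the sphere minimizes entropy.} Let $M$ be closed, embedded, and not the round sphere. By the Colding--Minicozzi classification of entropy-stable shrinkers, $M$ is entropy-unstable, since the only $F$-stable shrinkers with polynomial volume growth are generalized cylinders $\Sn^k\times\R^{n-k}$ and hyperplanes. Hence some small graphical perturbation $M'$ of $M$ satisfies $\Ent(M')<\Ent(M)$.
\begin{itemize}
\item Run mean curvature flow from $M'$. Since $M'$ is closed, the flow becomes singular in finite time.
\item Huisken's monotonicity gives that every tangent flow at a singular point is a self-shrinker $\Sigma$ with $\Ent(\Sigma)\le \Ent(M')<\Ent(M)$.
\item If $\Sigma$ is a generalized cylinder, Lemma~\ref{lem:stone} gives $\Ent(\Sigma)=\Ent(\Sn^k)\ge \Ent(\Sn^n)$.
\item Otherwise, the perturbation can be chosen so that the flow avoids non-generic singularities. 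Iterating this descent on entropy reaches a generic shrinker, hence a cylinder or a sphere.
\end{itemize}
In either case $\Ent(M)>\Ent(\Sn^n)$. The delicate part of this step is to justify that the tangent flow is smooth and embedded with multiplicity one. For this I would use the low-entropy hypothesis together with Brakke and White regularity: when the entropy is below $2$, multiplicity-one limits and the absence of singular sets of top dimension are guaranteed.

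\textbf{Step 2: the gap.} Suppose no $\delta_n$ exists. Then there are closed embedded shrinkers $M_i\neq \Sn^n(\sqrt{2n})$ with $\Ent(M_i)\to \Ent(\Sn^n)<2$.
\begin{itemize}
\item Entropy bounds give uniform local area bounds. By Allard and Brakke compactness with the entropy $<2$ multiplicity-one argument, a subsequence converges, smoothly away from a small singular set, to a shrinker $M_\infty$ with $\Ent(M_\infty)\le\Ent(\Sn^n)$.
\item Step 1 and its noncompact analogue (via cylinders) force $M_\infty=\Sn^n(\sqrt{2n})$. The singular set must then be empty, since a singular tangent cone would carry entropy at least $\Ent(\Sn^{n-1})>\Ent(\Sn^n)$.
\item So the convergence is smooth, and the $M_i$ are normal graphs over the sphere with $C^{2,\alpha}$-small norm.
\item The rigidity theorem for the sphere (Colding--Ilmanen--Minicozzi) now applies: the sphere is an isolated shrinker, because the linearized operator $L=\Delta-\frac12\langle x,\nabla\cdot\rangle+|A|^2+\frac12$ has kernel consisting only of rotations and translations, and these can be ruled out.
\item Hence $M_i=\Sn^n(\sqrt{2n})$ for large $i$, a contradiction.
\end{itemize}

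The main obstacle is the regularity of the limit in Step 2 and of the tangent flows in Step 1 in general dimension $n$. Both are handled using the assumption that entropy is strictly below $\Ent(\Sn^{n-1})$, which excludes lower-dimensional singular models.
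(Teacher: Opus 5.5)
The paper gives no proof of this statement: it is quoted from Colding--Ilmanen--Minicozzi--White and used as a black box, so your opening sentence matches the paper's treatment exactly. The reproof you sketch, however, has genuine gaps, and they sit where the real work lies.

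In Step~1 everything hinges on the case where a tangent flow $\Sigma$ of the perturbed flow is not a generalized cylinder. There you assert that the perturbation ``can be chosen so that the flow avoids non-generic singularities'' and that iterating ``reaches a generic shrinker''. Neither claim is justified.
\begin{enumerate}
\item The first assertion is essentially the generic mean curvature flow problem, which is not available in general dimension.
\item The iteration has no termination mechanism. A strictly decreasing sequence of entropies can accumulate at a value you do not control. You would need an infimum/compactness argument over some class of shrinkers, and that immediately brings in singular and noncompact limits.
\item The Colding--Minicozzi instability you want to reapply requires $\Sigma$ to be smooth. Entropy below $2$ gives multiplicity one, not smoothness.
\item If $\Sigma$ is noncompact and non-cylindrical, its entropy-decreasing perturbation is noncompact. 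Then neither extinction nor a barrier forces the new flow to become singular, and the descent simply stops.
\end{enumerate}
What is missing is the following step. First reduce to $\Ent(M)<\Ent(\Sn^{n-1})$; otherwise $\Ent(M)\geq\Ent(\Sn^{n-1})>\Ent(\Sn^n)$ by Lemma~\ref{lem:stone}. Then use a one-sided perturbation of $M$, with the shrinking $M$ as a barrier, to produce a singularity whose tangent flow is \emph{compact} and of smaller entropy. Finally, replace the open-ended iteration by a compactness argument.

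Step~2 has the same defect in another form. A limit $M_\infty$ of closed shrinkers with $\Ent(M_i)\to\Ent(\Sn^n)$ is not a priori compact or smooth. To identify it with the sphere you invoke ``the noncompact analogue of Step~1''. For noncompact limits this is essentially Conjecture~\ref{conj:CIMW-nonflat}, and you would even need it for singular limits. CIMW posed this conjecture rather than proved it, and the paper treats it as open for $3\leq n\leq 6$; this is exactly why Theorem~\ref{thm:two-dimensional-gap} is restricted to $n=2$. You would have to show directly that such limits remain compact and regular before appealing to the closed case.

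Your closing slogan does not supply this. Entropy below $\Ent(\Sn^{n-1})$ does not by itself exclude singular models: for example, the static triple junction in $\R^3$ is a (singular) shrinker with entropy $3/2<\Ent(\Sn^1)$.

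A smaller inaccuracy concerns the kernel of $L$ on $\Sn^n(\sqrt{2n})$, where $L=\Delta+1$. The functions $\inner{\nu}{a}$ satisfy $L\inner{\nu}{a}=\frac12\inner{\nu}{a}$, so they are not in the kernel. In fact the kernel is trivial, because no eigenvalue $k(k+n-1)/(2n)$ of $-\Delta$ equals $1$. Isolation of the sphere then follows from the implicit function theorem. More simply, a closed shrinker $C^2$-close to the sphere is mean convex, hence round by Huisken's classification.
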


For hypersurfaces, Bernstein--Wang proved this sharp entropy lower bound for all smooth, closed, embedded hypersurfaces up to dimension six \cite{BernsteinWangLuInvent}; the dimension restriction was later removed by Zhu \cite{Zhu2020}.

We will also use the following two-dimensional entropy gap.

\begin{theorem}[Bernstein--Wang \cite{BernsteinWang2017}]\label{thm:BW}
Let $\Sigma^2\subset\R^3$ be a smooth, complete embedded self-shrinker with polynomial volume growth.  If
\begin{equation*}
 \Ent(\Sigma)\leq \Ent(\Sn^1),
\end{equation*}
then $\Sigma$ is one of the following: the round sphere $\Sn^2(2)$, a plane through the origin, or a round cylinder $\Sn^1(\sqrt2)\times\R$ up to a rotation.  In addition, there exists $\delta_0>0$ such that these are the only complete embedded self-shrinkers in $\R^3$ with entropy less than $\Ent(\Sn^1)+\delta_0$.
\end{theorem}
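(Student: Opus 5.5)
Since Theorem~\ref{thm:BW} is an external result, the plan below reconstructs the structure of Bernstein--Wang's argument rather than giving a new proof; it has two parts.

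\textbf{Classification under $\Ent(\Sigma)\leq\Ent(\Sn^1)$.} The goal is to reduce to Brendle's uniqueness theorem \cite{Brendle2016}: a complete embedded self-shrinker in $\R^3$ of genus zero is a plane, the round sphere $\Sn^2(2)$, or the round cylinder $\Sn^1(\sqrt2)\times\R$. So the task is to show that low entropy forces genus zero. I would carry this out in four steps.
\begin{enumerate}
\item Since $\Ent(\Sigma)\leq\Ent(\Sn^1)<2$ by Lemma~\ref{lem:stone}, every tangent flow and every limit at infinity of $\Sigma$ has multiplicity one.
\item Using L.~Wang's description of the ends of finite-topology shrinkers, I would show that each end of $\Sigma$ is either smoothly asymptotic to a regular cone or is a cylinder. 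If a cylindrical end occurs, rigidity of cylinders forces $\Sigma=\Sn^1(\sqrt2)\times\R$.
\item In the asymptotically conical case, the entropy bound controls the cone's link: it must be a single embedded closed curve in $\Sn^2$ whose cone has entropy below $\Ent(\Sn^1)$.
\item I would then prove the topological statement that $\Sigma$ separates $\R^3$ into two components, each diffeomorphic to a ball or half-space. The approach is to run the rescaled mean curvature flow from perturbations of $\Sigma$ to either side. By entropy monotonicity, every singularity of these flows has entropy at most $\Ent(\Sn^1)$. By step~(1) and an induction on entropy, each such singularity is therefore a sphere or a cylinder, so every singularity is a neckpinch or an extinction. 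Flows with only such singularities can only simplify topology in a controlled way, and this yields that both complementary regions are simply connected handlebodies of genus zero.
\end{enumerate}
Brendle's theorem then finishes the classification. I expect step~(4), the flow-based topological argument, to be the main obstacle: it needs a careful surgery-free analysis of weak flows with only spherical and cylindrical singularities, together with control of the conical ends during the flow.

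\textbf{The gap $\delta_0$.} I would argue by contradiction. Suppose there are shrinkers $\Sigma_j$, none of them in the list, with $\Ent(\Sigma_j)\to\Ent(\Sn^1)$ from above (the classification already excludes entropy at most $\Ent(\Sn^1)$).
\begin{itemize}
\item Entropy below $2$ and dimension two give uniform local area bounds and a genus bound. Via the Colding--Minicozzi compactness theory, this yields a subsequence converging smoothly with multiplicity one on compact sets, away from a finite set that is then removable.
\item The limit is a complete embedded shrinker of entropy at most $\Ent(\Sn^1)$, hence a plane, sphere, or cylinder by the first part.
\item Smooth convergence to the plane or sphere is impossible for non-flat, nonspherical $\Sigma_j$. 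For the plane, the entropy of $\Sigma_j$ would tend to $1$, not to $\Ent(\Sn^1)$. For the sphere, the $\Sigma_j$ would be closed graphs over $\Sn^2(2)$ with small norm and would therefore equal $\Sn^2(2)$ by local rigidity.
\item For the cylinder, the Colding--Ilmanen--Minicozzi rigidity theorem says that a shrinker close to $\Sn^1(\sqrt2)\times\R$ on a sufficiently large ball is itself a cylinder. This contradicts the assumption and completes the argument.
\end{itemize}
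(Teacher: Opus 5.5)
The paper gives no proof of this statement. It quotes \cite[Corollary~1.2]{BernsteinWang2017} and only says that a topological theorem for asymptotically conical shrinkers is combined with Brendle's genus-zero classification. Your architecture matches that description: force genus zero and apply Brendle, then use compactness plus Colding--Ilmanen--Minicozzi rigidity for $\delta_0$. The $\delta_0$ part is sound once the classification is known, with one exception: the genus bound you invoke is not justified by an entropy bound, and it is not needed. With entropy below $2$, the tangent cones of a subsequential limit are stationary mod-$2$ cycles of density $<2$ in $\R^3$, hence planes. So the limit is smooth with multiplicity one, and Allard/White regularity gives smooth convergence. The classification part, however, has a genuine gap in steps (3)--(4), which is where all the content lies.

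\textbf{Step (3).} You assert that the entropy bound forces the link $L$ to be a single curve. An entropy bound on the cone only controls quantities such as its vertex density $\mathrm{length}(L)/2\pi$, and that does not prevent $L$ from having several components. Connectedness of $L$ is exactly the conclusion of the Bernstein--Wang theorem, not an input to it.

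\textbf{Step (4).} This step, which should supply that conclusion, is circular. The singularities of your perturbed flows are shrinkers of entropy $\le\Ent(\Sn^1)$, and knowing that these are spheres or cylinders is the very statement being proved. Since entropy is a continuous parameter, ``induction on entropy'' needs an argument you do not give.

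\emph{What makes it non-circular is one-sidedness.} Perturb $\Sigma$ to one side by the first eigenfunction of its stability operator. The resulting flow moves, in rescaled time, monotonically away from $\Sigma$, so its tangent flows are mean-convex shrinkers and hence generalized cylinders. If in addition the perturbed flow has entropy strictly below $\Ent(\Sn^1)$, cylinders are excluded and only spherical extinctions remain. This requires an entropy-lowering perturbation when $\Ent(\Sigma)=\Ent(\Sn^1)$.

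\emph{Excluding neckpinches is essential.} A solid torus bounded by a thin mean-convex torus is swept out by a flow whose singularities are all cylindrical or spherical. So ``only sphere and cylinder singularities'' does not yield simply connected complementary regions.

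\emph{The long-time behaviour is also needed.} Even with only spherical singularities, you must show the one-sided flow leaves every compact set. Then, far out, the unswept region is modelled on the cone over the part of $\Sn^2\setminus L$ on that side. Only then can you conclude that each component of $\R^3\setminus\Sigma$ is a cone region over a single disc, i.e.\ that $L$ is one circle and $\Sigma$ is a disc.

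\textbf{Step (2).} This step has a missing hypothesis: L.~Wang's description of ends assumes finite topology, which you have not established. Instead, blow down the self-similar flow $t\mapsto\sqrt{-t}\,\Sigma$. At $(x,0)$ with $x\neq0$, its tangent flows are invariant under translation along $x$. They therefore have the form $\Gamma\times\R$, with $\Gamma$ an embedded shrinking curve of entropy at most $\Ent(\Sn^1)$, i.e.\ a line or the circle $\Sn^1(\sqrt2)$.
\begin{itemize}
\item If $\Gamma$ is a line, White's regularity theorem makes $\Sigma$ smoothly asymptotic to a regular cone, which also gives finite topology.
\item If $\Gamma$ is the circle, the Gaussian density at $(x,0)$ equals $\Ent(\Sigma)$. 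The flow is then also self-similar about $(x,0)$, so $\Sigma$ splits and is the cylinder.
\end{itemize}
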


The proof of Theorem \ref{thm:BW} (see Corollary 1.2 in \cite{BernsteinWang2017}) combines a topological theorem for asymptotically conical self-shrinkers with Brendle's \cite{Brendle2016} classification of embedded genus-zero self-shrinkers.

\section{Proof of the theorems}\label{sec:proofs}
Following the normal-coordinate-function idea used by Ge and Li \cite{GeLi2022}, we prove the weighted estimate below.  The key point is to combine the weighted Poincar\'e inequality with normal coordinate functions and the Gaussian divergence formula.  If the weighted curvature integral is infinite, then the desired lower bound is automatic; hence, throughout the proof of Proposition~\ref{prop:general}, we may assume that
\[
\int_M |A|^2\rho\,d\mu<\infty.
\]

\begin{proposition}\label{prop:general}
Let $M^n\subset\R^{n+1}$ be a connected, properly embedded, two-sided hypersurface with polynomial volume growth.  Assume that $M$ separates $\R^{n+1}$ into two components.  Suppose that there exists a constant $\lambda>0$ such that, for every smooth function $f$ satisfying
\[
 \int_M (f^2+|\nabla f|^2)\rho\,d\mu<\infty,
 \qquad
 \int_M f\rho\,d\mu=0,
\]
one has the weighted Poincar\'e inequality
\begin{equation}\label{eq:abstract-poincare}
 \int_M |\nabla f|^2\rho\,d\mu
 \geq \lambda\int_M f^2\rho\,d\mu.
\end{equation}
Then
\begin{equation}\label{eq:general-estimate}
 \int_M |A|^2\rho\,d\mu
 \geq
 \lambda\,\frac{\left(\int_M\rho\,d\mu\right)^2-4^n\pi^n}{\int_M\rho\,d\mu}.
\end{equation}
\end{proposition}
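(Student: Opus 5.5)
The plan is to apply the weighted Poincar\'e inequality \eqref{eq:abstract-poincare} to the normal coordinate functions $\psi_{a_i}$ for an orthonormal basis, and then sum. Summing recovers $|A|^2$ on the left by Lemma~\ref{lem:grad-identities}, and $|\nu|^2=1$ on the right. As noted before the statement, we may assume $\int_M|A|^2\rho\,d\mu<\infty$. Write $W=\int_M\rho\,d\mu$, which is finite by polynomial volume growth.

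\textbf{Step 1 (basis and admissibility).} Take the orthonormal basis $\{a_1,\ldots,a_{n+1}\}$ from Lemma~\ref{lem:basis}. Then $\int_M\psi_{a_i}\rho\,d\mu=0$ for $i\le n$, and $a_{n+1}$ is parallel to $V=\int_M\nu\rho\,d\mu$ when $V\neq0$.
\begin{itemize}
\item For $i\le n$ we take $f=\psi_{a_i}$. For $i=n+1$ we take $f=\psi_{a_{n+1}}-c$ with $c=W^{-1}\int_M\psi_{a_{n+1}}\rho\,d\mu$, so that $f$ has zero weighted mean.
\item These functions are smooth because $\nu$ is smooth.
\item They are bounded by $2$, since $|\psi_{a_i}|\le1$ and $|c|\le1$. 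By Lemma~\ref{lem:grad-identities}, $|\nabla f|=|A(a_i^T)|\le|A|$.
\end{itemize}
Together with $W<\infty$ and the finiteness of $\int_M|A|^2\rho\,d\mu$, this shows that each $f$ is admissible in \eqref{eq:abstract-poincare}.

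\textbf{Step 2 (apply and sum).} For the centered function we have
\[
\int_M(\psi-c)^2\rho\,d\mu=\int_M\psi^2\rho\,d\mu-\frac{\bigl(\int_M\psi\rho\,d\mu\bigr)^2}{W}.
\]
Applying \eqref{eq:abstract-poincare} to each $f$ and adding gives
\[
\sum_{i=1}^{n+1}\int_M|A(a_i^T)|^2\rho\,d\mu\ \ge\ \lambda\Bigl(\int_M\sum_i\psi_{a_i}^2\rho\,d\mu-\frac{\bigl(\int_M\psi_{a_{n+1}}\rho\,d\mu\bigr)^2}{W}\Bigr).
\]
Now evaluate both sums pointwise.
\begin{itemize}
\item On the left, take an orthonormal frame $\{e_j\}$ of $T_pM$. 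Then
\[
\sum_i|A(a_i^T)|^2=\sum_{i,j,k}\inner{a_i}{e_j}\inner{a_i}{e_k}\inner{A e_j}{A e_k}=\sum_j|Ae_j|^2=|A|^2,
\]
because $\sum_i\inner{a_i}{e_j}\inner{a_i}{e_k}=\delta_{jk}$.
\item On the right, $\sum_i\psi_{a_i}^2=|\nu|^2=1$, so the first term equals $W$.
\end{itemize}

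\textbf{Step 3 (control the mean).} Since $|a_{n+1}|=1$, Lemma~\ref{lem:weighted-mean-bound} gives
\[
\Bigl|\int_M\psi_{a_{n+1}}\rho\,d\mu\Bigr|\le 2^n\pi^{n/2}.
\]
Hence the subtracted term is at most $4^n\pi^n/W$. This yields
\[
\int_M|A|^2\rho\,d\mu\ge\lambda\Bigl(W-\frac{4^n\pi^n}{W}\Bigr)=\lambda\,\frac{W^2-4^n\pi^n}{W},
\]
which is \eqref{eq:general-estimate}.

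The main point needing care is not the algebra but the admissibility of the test functions. It is handled by the reduction to finite weighted curvature integral, by $W<\infty$, and by the boundedness of $\psi_a$. The separation hypothesis enters only through Lemma~\ref{lem:weighted-mean-bound}, which controls the one non-centered direction.
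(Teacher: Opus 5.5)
Your proposal is correct and follows essentially the same route as the paper's proof. You apply the Poincar\'e inequality to $\psi_{a_1},\ldots,\psi_{a_n}$ and to the centered $\psi_{a_{n+1}}-c$, sum using $\sum_\alpha\psi_{a_\alpha}^2=1$ and $\sum_\alpha|\nabla\psi_{a_\alpha}|^2=|A|^2$, and bound the one remaining mean by Lemma~\ref{lem:weighted-mean-bound}; you spell out the admissibility checks a little more explicitly than the paper does.
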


\begin{proof}
If $\int_M |A|^2\rho\,d\mu=\infty$, then \eqref{eq:general-estimate} is automatic.  Hence we assume throughout the proof that this integral is finite.  Then each $\psi_a$ satisfies
\[
 \int_M (\psi_a^2+|\nabla\psi_a|^2)\rho\,d\mu<\infty,
\]
because $|\psi_a|\leq |a|$ and $|\nabla\psi_a|\leq |A||a|$ by Lemma~\ref{lem:grad-identities}.

Choose the orthonormal basis $\{a_1,\ldots,a_{n+1}\}$ from Lemma~\ref{lem:basis}.  Applying \eqref{eq:abstract-poincare} to $\psi_{a_i}$ for $1\leq i\leq n$, and applying it to the zero-mean function $\psi_{a_{n+1}}-c$, where $c=(\int_M\psi_{a_{n+1}}\rho\,d\mu)/(\int_M\rho\,d\mu)$, gives
\begin{equation}\label{eq:poincare-sum}
\begin{aligned}
\sum_{\alpha=1}^{n+1}\int_M |\nabla\psi_{a_\alpha}|^2\rho\,d\mu
&\geq \lambda\sum_{\alpha=1}^{n+1}\int_M\psi_{a_\alpha}^2\rho\,d\mu \\
&\quad -\lambda\frac{\left(\int_M\psi_{a_{n+1}}\rho\,d\mu\right)^2}{\int_M\rho\,d\mu}.
\end{aligned}
\end{equation}
Here the weighted means of $\psi_{a_1},\ldots,\psi_{a_n}$ vanish by construction.

We next identify the two sums in \eqref{eq:poincare-sum}.  Since $\{a_\alpha\}_{\alpha=1}^{n+1}$ is an orthonormal basis of $\R^{n+1}$,
\begin{equation}\label{eq:sum-psi}
 \sum_{\alpha=1}^{n+1}\psi_{a_\alpha}^2
 =\sum_{\alpha=1}^{n+1}\inner{\nu}{a_\alpha}^2=|\nu|^2=1.
\end{equation}
By Lemma~\ref{lem:grad-identities},
\[
 \sum_{\alpha=1}^{n+1}|\nabla\psi_{a_\alpha}|^2
 =\sum_{\alpha=1}^{n+1}|A(a_\alpha^T)|^2.
\]
Let $\{e_1,\ldots,e_n\}$ be a local orthonormal tangent frame.  Since $A(e_i)$ is tangent,
\begin{equation}\label{eq:sum-grad-S}
\begin{aligned}
\sum_{\alpha=1}^{n+1}|A(a_\alpha^T)|^2
&=\sum_{\alpha=1}^{n+1}\sum_{i=1}^n\inner{A(a_\alpha^T)}{e_i}^2 \\
&=\sum_{i=1}^n\sum_{\alpha=1}^{n+1}\inner{a_\alpha}{A(e_i)}^2
 =\sum_{i=1}^n|A(e_i)|^2=|A|^2.
\end{aligned}
\end{equation}
Combining \eqref{eq:poincare-sum}, \eqref{eq:sum-psi}, and \eqref{eq:sum-grad-S}, we obtain
\begin{equation}\label{eq:before-mean-bound}
 \int_M |A|^2\rho\,d\mu
 \geq
 \lambda\left(\int_M\rho\,d\mu-
 \frac{\left(\int_M\psi_{a_{n+1}}\rho\,d\mu\right)^2}{\int_M\rho\,d\mu}\right).
\end{equation}
Finally, Lemma~\ref{lem:weighted-mean-bound} gives
\[
 \left(\int_M\psi_{a_{n+1}}\rho\,d\mu\right)^2\leq 4^n\pi^n.
\]
Substitution into \eqref{eq:before-mean-bound} proves \eqref{eq:general-estimate}.
\end{proof}

We now prove the two announced gap statements.

\begin{proof}[Proof of Theorem~\ref{thm:closed-main}]
For a closed embedded self-shrinker, Theorem~\ref{thm:BT} gives \eqref{eq:abstract-poincare} with $\lambda=1/4$.  Proposition~\ref{prop:general} therefore gives
\begin{equation}\label{eq:closed-step1}
 \int_M |A|^2\rho\,d\mu
 \geq
 \frac14\,\frac{\left(\int_M\rho\,d\mu\right)^2-4^n\pi^n}{\int_M\rho\,d\mu}.
\end{equation}
For  closed self-shrinkers, Theorem~\ref{thm:CM-entropy} gives
\begin{equation}\label{eq:int-rho-entropy}
 \int_M\rho\,d\mu=(4\pi)^{\frac n2}\Ent(M).
\end{equation}
Substituting \eqref{eq:int-rho-entropy} into \eqref{eq:closed-step1} gives
\begin{align*}
\frac14\frac{\left(\int_M\rho\,d\mu\right)^2-4^n\pi^n}{\int_M\rho\,d\mu}
&=\frac14\frac{(4\pi)^n\Ent(M)^2-4^n\pi^n}{(4\pi)^{\frac n2}\Ent(M)} \\
&=\frac14(4\pi)^{\frac n2}\frac{\Ent(M)^2-1}{\Ent(M)} \\
&=4^{\frac n2-1}\pi^{\frac n2}\frac{\Ent(M)^2-1}{\Ent(M)}.
\end{align*}
This proves \eqref{eq:closed-main-entropy}.

The function
\begin{equation*}
 \Phi(t)=\frac{t^2-1}{t}=t-\frac1t,
 \qquad t>0,
\end{equation*}
is strictly increasing because $\Phi'(t)=1+t^{-2}>0$.  By Theorem~\ref{thm:CIMW}, $\Ent(M)\geq\Ent(\Sn^n)$.  Applying the monotonicity of $\Phi$ to \eqref{eq:closed-main-entropy} gives \eqref{eq:closed-main-sphere}.  If $M$ is not the round sphere, Theorem~\ref{thm:CIMW} gives $\Ent(M)\geq\Ent(\Sn^n)+\delta_n$, and the same monotonicity gives \eqref{eq:closed-main-gap}.
\end{proof}

\begin{proof}[Proof of Theorem~\ref{thm:two-dimensional-gap}]
The polynomial volume growth assumption implies finite Gaussian area.  Theorem~\ref{thm:BT} again gives the weighted Poincar\'e inequality with constant $1/4$.  Applying Proposition~\ref{prop:general} with $n=2$ yields
\begin{equation}\label{eq:surface-step1}
 \int_\Sigma |A|^2\rho\,d\mu
 \geq
 \frac14\frac{\left(\int_\Sigma\rho\,d\mu\right)^2-16\pi^2}{\int_\Sigma\rho\,d\mu}.
\end{equation}
By Theorem~\ref{thm:CM-entropy},
\begin{equation*}
 \int_\Sigma\rho\,d\mu=(4\pi)\Ent(\Sigma).
\end{equation*}
Substitution into \eqref{eq:surface-step1} gives
\begin{align*}
\frac14\frac{(4\pi)^2\Ent(\Sigma)^2-16\pi^2}{4\pi\Ent(\Sigma)}
&=\pi\frac{\Ent(\Sigma)^2-1}{\Ent(\Sigma)}.
\end{align*}
This proves \eqref{eq:surface-main}.

If $\Sigma$ is not a plane, then Theorem~\ref{thm:BW} and Stone's ordering in Lemma~\ref{lem:stone} imply
\begin{equation}\label{eq:nonflat-entropy-lower}
 \Ent(\Sigma)\geq\Ent(\Sn^2).
\end{equation}
Indeed, if $\Ent(\Sigma)<\Ent(\Sn^2)$, then since $\Ent(\Sn^2)<\Ent(\Sn^1)$ by Lemma~\ref{lem:stone}, Theorem~\ref{thm:BW} would force $\Sigma$ to be a plane, a sphere, or a cylinder.  The plane is excluded by the hypothesis, while the sphere and cylinder have entropies $\Ent(\Sn^2)$ and $\Ent(\Sn^1)$, respectively, a contradiction.  Hence \eqref{eq:nonflat-entropy-lower} holds.  Since $t\mapsto (t^2-1)/t$ is increasing, \eqref{eq:surface-main} gives \eqref{eq:surface-sphere-lower}.

Finally, if $\Sigma$ is not a plane, not the round sphere, and not the round cylinder, Theorem~\ref{thm:BW} gives
\begin{equation*}
 \Ent(\Sigma)\geq \Ent(\Sn^1)+\delta_0.
\end{equation*}
Using again the monotonicity of $t-t^{-1}$ in \eqref{eq:surface-main} proves \eqref{eq:surface-cylinder-gap}.
\end{proof}

\bibliographystyle{amsplain}
\bibliography{260610selfreferences}

\end{document}